\newtheorem{lemma}{Lemma}
\newtheorem{proposition}[lemma]{Proposition}
\newtheorem{corollary}[lemma]{Corollary}
\newtheorem{theorem}[lemma]{Theorem}
\theoremstyle{definition}
\theoremstyle{remark} 
\newtheorem*{remark}{Remark}
\newcommand{\R}{\mathbb{R}}
\newcommand{\T}{\mathbb{T}}
\newcommand{\Z}{\mathbb{Z}}
\newcommand{\cF}{\mathcal{F}}
\newcommand{\cG}{\mathcal{G}}
\newcommand{\cH}{\mathcal{H}}
\newcommand{\cL}{\mathcal{L}}
\newcommand{\cN}{\mathcal{N}}
\DeclareMathOperator{\Hom}{Hom}
\author{Vivek Shende}
\title{The conormal torus is a complete knot invariant}
\begin{document}

\begin{abstract}
We use microlocal sheaf theory to show: knots can only have Legendrian isotopic conormal tori if they themselves are isotopic or mirror
images.  
\end{abstract}

\maketitle

\section{Introduction} 

A knot $K \subset \R^3$ determines, by taking the unit conormals, a 
torus in the unit cosphere bundle.
By general position arguments, 
the smooth type of this embedding $\T_K \subset S^* \R^3$ 
knows nothing about the knot.  However, this embedding is that
of a Legendrian in a contact manifold, and a smooth isotopy of 
knots $K \sim K'$ induces a Legendrian isotopy of conormals
$\T_K \sim \T_{K'}$.  
Thus the Legendrian isotopy type of  $\T_K$ is a topological invariant of $K$. 

More generally, taking the cosphere or cotangent
bundle gives a natural functor from smooth topology to 
contact or symplectic geometry, and it is a fundamental 
question in these subjects to understand what this functor 
remembers and what it forgets.  
A representative example 
is Arnold's conjecture that every 
compact exact Lagrangian in the cotangent bundle 
Hamiltonian isotopic to the zero section; there has been 
much recent work in this direction \cite{N1, FSS, Abo, AK}.

The question of the extent to which $\T_K$ determines $K$ 
has previously been studied by 
holomorphic curve techniques, i.e., by computing the relative
contact homology of the pair $(S^* \R^3, \T_K)$.  Perhaps
the first indication of the true strength of this invariant was 
in \cite{Ng3}, where it was shown to detect the 
unknot.  More recently, this was extended to  torus knots \cite{GLi}.  

In the present article we take a different approach to the study 
of $\T_K$.  In general, one can study the geometry of Legendrians in cosphere bundles
by assigning, to a Legendrian $\Lambda \subset S^* M$, 
the category of sheaves with microsupport in $\Lambda$. 
This approach reaches back in some sense to the microlocal analysis
of Sato and H\"ormander, is technically 
built on the microlocal sheaf theory of Kashiwara and Schapira \cite{KS}, 
and was applied to study symplectic geometry by Tamarkin \cite{T},
and subsequent authors \cite{GKS, Gui, Gui2, Gui3, Chi, STZ, STWZ, STW}.  
Using this method, we  give here the first proofs of the following
results: 

\begin{theorem}
Let $L, L'$ be oriented non-split links in $\R^3$.  If there is a parameterized 
Legendrian isotopy $\T_{L} \to \T_{L'}$, then there is a 
topological isotopy $L \to L'$. 
\end{theorem}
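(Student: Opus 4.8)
\emph{Overview and Step 1 (sheaves and invariance).} The plan is to convert the Legendrian isotopy type of $\T_L$ into homological algebra, extract from that data the complement of $L$ together with its peripheral structure, and conclude with classical three--manifold topology. To $L$ I attach the category $\cC_L := \mathrm{Sh}_{\T_L}(\R^3)$ of constructible sheaves $F$ on $\R^3$ with $\mathrm{SS}(F)\cap\dot{T}^*\R^3\subseteq\R_{>0}\T_L$. First I would note that a parameterized Legendrian isotopy $\T_L\to\T_{L'}$ extends, by Legendrian isotopy extension, to a compactly supported ambient contact isotopy of $S^*\R^3$; quantizing it via the Guillermou--Kashiwara--Schapira theorem then produces an equivalence $\cC_L\xrightarrow{\ \sim\ }\cC_{L'}$ intertwining the microlocalization functors $\mu_{\T_L},\mu_{\T_{L'}}$ along the two tori with the diffeomorphism $\T_L\cong\T_{L'}$ recorded by the (parameterized) isotopy. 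Thus it suffices to reconstruct $(L\subset\R^3)$, up to ambient isotopy, from the triple $(\cC_L,\,\mu_{\T_L},\,\text{oriented component decomposition of }\T_L)$.

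\emph{Step 2 (categorical reconstruction).} Since $T^*_L\R^3$ is the full conormal bundle of the curve $L$ and projectivizes onto $\T_L$, the microsupport condition is precisely constructibility for the stratification $(L,U)$ with $U:=\R^3\setminus L$, so $\cC_L$ is the category of sheaves smooth along $L$. Inside $\cC_L$ the objects of compact support form --- by local constancy on $U$ --- the subcategory $\prod_i\mathrm{Loc}(L_i)$, which I would characterize intrinsically (e.g. by invertibility of $\Gamma_c(\R^3;F)\to\Gamma(\R^3;F)$), a description transported by the equivalence of Step 1. The associated recollement recovers the Verdier quotient $\mathrm{Loc}(U)$, the gluing functor $j_!\colon\mathrm{Loc}(U)\to\cC_L$, and hence --- composing with $\mu_{\T_{L_i}}$ and removing a fixed twist --- the restriction $\mathrm{Loc}(U)\to\mathrm{Loc}(\partial N(L_i))$ along the $i$th peripheral torus; the meridian of $L_i$ appears here as the fibre class of $\T_{L_i}\to L_i$, which is recognizable contact--topologically (it bounds a Legendrian disc in the cosphere bundle) and is carried along the isotopy. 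A Tannakian/cosheaf reconstruction from $\mathrm{Loc}(U)$ --- any two stalk fibre functors being isomorphic, $U$ connected --- then recovers the homotopy type of $U$, and together with the peripheral restriction functors the group $\pi_1(\R^3\setminus L)=\pi_1(S^3\setminus L)$ with its peripheral subgroups $\pi_1(\partial N(L_i))$ and marked meridians; as $L$ is non--split the compact complement $M:=S^3\setminus N(L)$ is aspherical, so this is the same datum as the homotopy type of $M$ relative to its boundary tori.

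\emph{Step 3 (3--manifold topology).} Because $L$ is non--split, $M$ is Haken with incompressible toral boundary, so by Waldhausen's theorem the isomorphism $\pi_1(M)\cong\pi_1(M')$ matching peripheral systems is induced by a homeomorphism $M\to M'$; matching the meridians, this extends over the filling solid tori to a homeomorphism $(S^3,L)\to(S^3,L')$ of pairs. Using the component orientations, the bijection of components supplied by the parameterization, and the fact that an orientation--preserving self--homeomorphism of $S^3$ is isotopic to the identity, one upgrades this to an ambient isotopy $L\to L'$.

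\emph{Where the difficulty lies.} The crux is Step 2: the GKS equivalence is a priori only an abstract equivalence of categories, and one must show it still pins down the link --- that the recollement by ``sheaves supported on $L$'', the microlocalization functors, and the marked meridians all survive it. Compatibility with microlocalization is part of the GKS package; the genuinely delicate points I expect are the intrinsic characterization of the compactly supported subcategory (together with the monoidal and fibre--functor structure one needs to recover $\pi_1(U)$, not merely the category $\mathrm{Loc}(U)$) and the contact--topological recognition of the meridian class inside each Legendrian torus $\T_{L_i}$.
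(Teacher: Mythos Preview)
Your overall architecture matches the paper's: GKS quantization gives an equivalence of constructible sheaf categories, from which one reconstructs $\pi_1$ of the complement with its peripheral structure and concludes via Waldhausen. But the mechanism you propose for Step~2 has a real gap, and it is precisely the gap the paper's key idea is designed to close.

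The GKS equivalence $\Phi$ is a kernel transform, and there is no reason for it to respect global-section functors such as $\Gamma$, $\Gamma_c$, or any monoidal structure. (The paper remarks explicitly that $\Phi$ need \emph{not} be monoidal: along the isotopy the Legendrian is not a conormal and $shv_\Lambda(\R^3)$ is not closed under tensor product.) So your proposed characterization of $loc(L)$ via invertibility of $\Gamma_c\to\Gamma$ is not shown to be transported by $\Phi$; and without a $\Phi$-compatible fibre functor your Tannakian step only yields an abstract equivalence of module categories, which does \emph{not} in general determine the group. You correctly flag this as the delicate point but do not resolve it.

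The paper's resolution exploits the noncompactness of $\R^3$: the contact isotopy is compactly supported, so one may choose $m\in\R^3$ outside the projection of its support, whence the stalk functor $\cF\mapsto\cF_m$ satisfies $(\Phi\cF)_m\cong\cF_m$ tautologically. This single observation does all the work you need: $loc(L)$ is the subcategory with vanishing stalk at $m$ (hence $\Phi$-stable), $loc(\R^3\setminus L)$ is its right orthogonal, the abelian heart is cut out by concentration of $\cF_m$ in degree zero, and $\Z[\pi_1(\R^3\setminus L)]$ is recovered as endomorphisms of the fibre functor $\cF\mapsto\cF_m$. One then needs the further input that non-split link groups are left-orderable \cite{HS}, so that $\Z[\pi_1]$ has only trivial units and determines $\pi_1$ --- a step your outline omits.

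Two smaller points. For Theorem~1 the isotopy is \emph{parameterized}, so meridians and longitudes are carried to meridians and longitudes by hypothesis; your contact-topological recognition of the meridian via a bounding Legendrian disc is unnecessary here (a sheaf-theoretic characterization of the meridian is used only for the unparameterized Theorem~2). And for the peripheral restriction the paper uses $\mu\cH om(\Z_K,j_!\,\cdot\,)|_{\T_K}$, which requires checking $\Phi\Z_K=\Z_{K'}$; this again hinges on the point $m$ (to see that $\Phi\Z_K$ is supported on $K'$ and that $\Phi\Z_{\R^3}=\Z_{\R^3}$).
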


\begin{theorem}
For knots $K, K'$ in $\R^3$,  if there is any
Legendrian isotopy $\T_K \to \T_{K'}$, then the knots are either isotopic,
or mirror. 
\end{theorem}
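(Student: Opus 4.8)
The plan is to reduce this theorem to the theorem for links stated just above. A knot is a non-split link, and its conormal torus does not depend on the orientation of the knot, so the only gap is that the link theorem assumes a \emph{parameterized} isotopy; bridging it is exactly what will cost the mirror ambiguity. So fix orientations of $K$ and $K'$, and suppose given a Legendrian isotopy $\T_K\to\T_{K'}$. By the Legendrian isotopy extension theorem it is the trace of a compactly supported contact isotopy $\{\Phi_t\}$ of $S^*\R^3$ with $\Phi_0=\mathrm{id}$ and $\Phi_1(\T_K)=\T_{K'}$; this already exhibits a parameterized Legendrian isotopy $\T_K\to\T_{K'}$, but with a possibly nonstandard parameterization $\phi:=\Phi_1|_{\T_K}\colon\T_K\to\T_{K'}$ in place of the one coming from the identifications of the conormal tori with unit conormal bundles. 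The work is to recognize $\phi$.

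Each conormal torus carries a canonical meridian class $\mu$, the fiber of $\T_K\to K$; it bounds a hemisphere of a conormal $2$-sphere and hence an embedded Legendrian disk in $S^*\R^3$. It also carries a canonical Seifert longitude $\lambda_0$, and once the orientations of $\R^3$ and of the knot are fixed both classes are co-oriented. Since $\Phi_1$ is contact-isotopic to the identity it preserves the Maslov (rotation) class and the property of a class bounding an embedded Legendrian disk; using the Maslov class of the conormal torus together with the fact that $\mu$ is, up to sign, the only primitive class bounding such a disk — and absorbing a residual Dehn twist by a self-Legendrian-isotopy of $\T_{K'}$ if necessary — one gets $\phi_*\mu=\pm\mu$ and $\phi_*\lambda_0=\pm\lambda_0$, so that $\phi_*$ is one of the four matrices $\mathrm{diag}(\pm1,\pm1)$ in the basis $(\mu,\lambda_0)$.

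Now reversing the orientation of $K$ changes $\phi_*$ by $\mathrm{diag}(-1,-1)$, and passing from $K'$ to its mirror $K'^{*}:=\rho(K')$, for $\rho$ a reflection of $\R^3$ (with $K'^{*}$ oriented via $\rho$), changes it by $\mathrm{diag}(-1,1)$: indeed $\rho$ reverses the ambient orientation, so the induced contactomorphism $\rho_*$ of $S^*\R^3$ carries $\T_{K'}$ onto $\T_{K'^{*}}$ while reversing the canonical co-orientation of the meridian. These two moves generate $\{\mathrm{diag}(\pm1,\pm1)\}$, so after performing some of them we may assume $\phi$ is the standard parameterization: of a parameterized Legendrian isotopy $\T_K\to\T_{K'}$ if the mirror move was not used, and of an identification of $\T_K$ with $\T_{K'^{*}}$ via the contactomorphism $\rho_*\circ\Phi_1$ — not itself assumed contact-isotopic to the identity, but still inducing the equivalence of categories of sheaves microsupported in the two conormal tori that the proof of the link theorem consumes — if it was. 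In the first case the link theorem gives a topological isotopy $K\to K'$; in the second, its proof gives a topological isotopy $K\to K'^{*}$, equivalently an orientation-preserving homeomorphism $\S^3\setminus K\to\S^3\setminus K'^{*}$ matching meridians and Seifert longitudes, hence an isotopy $K\to K'^{*}$ by the Gordon--Luecke theorem. Thus $K$ is isotopic to $K'$ or to its mirror.

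The step I expect to be the real obstacle is the rigidification of $\phi$ in the second paragraph: pinning $\phi_*$ down to $\mathrm{diag}(\pm1,\pm1)$ requires computing the Maslov class of the conormal torus, establishing the uniqueness up to sign of the primitive class bounding an embedded Legendrian disk, and disposing of the residual Dehn twist — in effect, controlling the ``Legendrian mapping class group'' of the conormal torus well enough to exclude exotic reparameterizations. A secondary point is to confirm that the reconstruction in the proof of the link theorem applies verbatim to the contactomorphism $\rho_*\circ\Phi_1$, which need not be contact-isotopic to the identity.
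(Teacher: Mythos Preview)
Your strategy is genuinely different from the paper's, and it has a concrete gap at exactly the point you flagged.

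You try to pin down $\phi_*$ on $H_1(\T_K)$ by contact-geometric means: the meridian should be the unique primitive class bounding an embedded Legendrian disk, and the Maslov class plus a self-isotopy should kill the residual Dehn twist. The first claim is already false for the unknot. If $D^2\subset\R^3$ is a spanning disk for the unknot $K=\partial D^2$, then the unit conormal $\T_{D^2}$ is a pair of Legendrian disks in $S^*\R^3$, and the boundary of either component is a Seifert longitude in $\T_K$ (at $p\in K$ the conormal line to $D^2$ sits inside the conormal plane to $K$). So both the meridian and the longitude bound embedded Legendrian disks, and your characterization does not single out the meridian. More generally, which classes on $\T_K$ bound Legendrian disks is a subtle question tied up with boundary slopes, not something you can read off for free. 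The Dehn-twist step is also unsubstantiated: absorbing $\lambda_0\mapsto \pm\lambda_0+k\mu$ would require producing self-Legendrian-isotopies of $\T_{K'}$ realizing arbitrary Dehn twists, and there is no reason to expect these exist.

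The paper avoids all of this by never leaving the sheaf side. It uses that the microlocalization functor $\mu\cH om$ of Kashiwara--Schapira intertwines the contact isotopy $\phi$ on $S^*\R^3$ with the GKS equivalence $\Phi$ on sheaves: $\phi_*\,\mu\cH om(\cF,\cG)|_{S^*\R^3}=\mu\cH om(\Phi\cF,\Phi\cG)|_{S^*\R^3}$. One first checks, using the stalk-at-$m$ functor and a small cohomology computation, that $\Phi\Z_K=\Z_{K'}$. Then for a \emph{nontrivial} rank-one local system $\cL$ on $K$, the sheaf $\mu\cH om(\Z_K,\cL)|_{\T_K}$ is a local system on $\T_K$ whose holonomy is trivial precisely along the meridian; since this is transported by $\phi_*$ to $\mu\cH om(\Z_{K'},\Phi\cL)|_{\T_{K'}}$, the meridian goes to the meridian up to sign. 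The longitude is then pinned down, again up to sign, as the primitive class vanishing in $H_1(\R^3\setminus L)/(\text{meridians})$, a group already recovered from the sheaf category. This yields the peripheral data up to signs and one finishes with Waldhausen alone; Gordon--Luecke is not needed. Your secondary worry about $\rho_*\circ\Phi_1$ also does not arise, since no reflection of the ambient space is ever introduced.
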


In the first theorem above, by ``parameterized isotopy'' 
we mean that the isotopy respects
the homotopy classes of the meridians and longitudes of the tori; we 
{\em do not} impose this condition in the second theorem.  Also, 
in the first theorem, by non-split, we mean that no sphere separates
some components of the link from some others.

The  strategy of proof is as follows.  A standard consequence of 
Gray's stability theorem \cite{Gr} is that a Legendrian isotopy
is induced by an ambient contact isotopy, which can moreover be chosen
trivial away from an open set around the Legendrian isotopy. Henceforth by Legendrian
isotopy we will always implicitly mean such an extension. 
The sheaf quantization of this contact isotopy \cite{GKS} gives an equivalence of derived categories 
$shv_{\T_K}(\R^3) \cong shv_{\T_{K'}}(\R^3)$.  

Whether or not such a {\em derived equivalence of knots} forces
the knots to be isotopic is, to our knowledge, open.  Rather than resolve that question, 
we try and rigidify the above isomorphism.  This is a nontrivial task: 
it is not generally possible to extract a ring from its abelian category
of modules, and it is not generally possible to extract an abelian
category from its derived category.  For instance,
the existence and significance
of derived equivalences in birational algebraic geometry \cite{Orl, BO}
is a subtle and fascinating subject.  Nonetheless, we are able to 
extract $\Z[\pi_1(\R^3 \setminus K)]$ from
the derived category of sheaves $shv_{\T_K}(\R^3)$; we also characterize 
the classes of the longitude and meridian.  
This is the main calculational work of this paper.  

What makes it possible, ultimately, is the fact that because we 
are working in the noncompact $\R^3$, the isotopy can be pushed off the 
fibre at a point.  This point provides a fibre functor which rigidifies our 
categories sufficiently to eventually extract the group ring.   
In particular, we do not know whether the result still holds with 
$\R^3$ replaced by $S^3$.  This would follow if it was shown 
that no derived equivalences of knots could exist.

To finish
the proof we appeal to known results in 3-manifold topology. 
First,  
the fundamental group of a link complement is left-orderable
 \cite[Lemma 2]{HS}, hence can be recovered from its
integral group ring.  Finally, by the work of Waldhausen \cite{Wal},
the fundamental group of the complement 
together with the classes of the meridians
and longitudes determines the link.

\vspace{3mm}

\subsection*{Acknowledgements} This project was inspired by discussions with Christopher Cornwell and Lenhard Ng,
in the wonderful environment of the Mittag-Leffler institute.  I also thank Laura Starkston and Jacob
Tsimerman for helpful conversations, Harold Williams for repeated explanations regarding the difference
between finite and infinite rank local systems, and Dusa McDuff for encouragement to write down
this result.  I am partially supported by NSF DMS-1406871.

\section{Recovering the fundamental group}

We recall the language of microlocal sheaf theory; for details see \cite{KS}.   
To a sheaf $\cF$ on $M$, there is a locus
$ss(\cF) \subset S^* M$ --- the directions along which the sections of the sheaf fail to propagate. 
A Legendrian $\Lambda \subset S^* M$ determines a full subcategory of the derived category of 
sheaves, the objects of which are: 
$$shv_\Lambda(M) = \{ \cF \mbox{ a sheaf on } M\,|\, ss(\cF) \subset \Lambda\}$$
Guillermou, Kashiwara, and Schapira \cite{GKS} (see also Tamarkin \cite{T}) 
show this category is a contact invariant, 
in the sense that: 

\begin{theorem} \cite{GKS}
A compactly supported contact isotopy on $S^* M$ carrying $\Lambda \rightsquigarrow \Lambda'$ 
induces an equivalence  $shv_\Lambda(M) \cong shv_{\Lambda'}(M)$.  
\end{theorem}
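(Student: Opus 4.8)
The plan is to produce the equivalence concretely via \emph{sheaf quantization} of the isotopy. First I would lift the given compactly supported contact isotopy of the cosphere bundle $S^*M = \dot T^*M /\R_{>0}$ to a homogeneous Hamiltonian isotopy $\hat\Phi\colon \dot T^*M\times I\to \dot T^*M$: the contact isotopy is generated by a time-dependent contact Hamiltonian, whose pullback to $\dot T^*M$ is a homogeneous degree-one function, and integrating the corresponding flow gives $\hat\Phi$, commuting with the $\R_{>0}$-action and compactly supported in the base directions. To $\hat\Phi$ one then attaches a conic Lagrangian $\Lambda_{\hat\Phi}\subset \dot T^*(M\times M\times I)$, namely the (homogenized) graph of $\hat\Phi$ viewed as a family over the time parameter $I$; this is the microsupport the quantizing kernel is required to carry.

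The core step is the existence of a kernel $K\in D(M\times M\times I)$ with $ss(K)$ contained in $\Lambda_{\hat\Phi}$ together with the zero section, and whose restriction $K|_{t=0}$ is the constant sheaf on the diagonal $\Delta_M\subset M\times M$. I would prove \emph{uniqueness} of such a $K$ first, as it is the lever for everything that follows: if two candidates agree at $t=0$, a suitable cone on a comparison map has microsupport disjoint from the codirection $dt$ of the projection to $I$, hence is pulled back from $M\times M$ by the microlocal propagation theorem of \cite{KS} (a sheaf on $N\times I$ whose microsupport avoids the $dt$-covectors is locally constant in $t$), and therefore vanishes for all $t$ since it does at $t=0$. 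For existence I would subdivide $I$ and use composition (convolution) of kernels to reduce to an isotopy supported in a Darboux-type chart and $C^0$-close to the identity, where the kernel can be written down by hand as a microlocal cut-off of the constant sheaf on the region swept out, and then glue these local solutions using uniqueness.

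Next I would check that $K$ is invertible for convolution of kernels: the inverse is the sheaf quantization $\bar K$ of the reverse isotopy $\hat\Phi^{-1}$, and both $K\circ\bar K$ and $\bar K\circ K$ are sheaf quantizations of the identity isotopy restricting to the diagonal sheaf at $t=0$, so by uniqueness each is the identity kernel. Then the standard estimate for the microsupport of a composition of kernels gives $ss(K|_{t=1}\circ\cF)\subset \hat\Phi_1(ss(\cF))$ for every $\cF\in D(M)$, and applying it to $\bar K$ as well upgrades this to an equality. Hence $K|_{t=1}\circ(-)$ is an autoequivalence of $D(M)$ sending $shv_\Lambda(M)$ to $shv_{\Lambda'}(M)$ with inverse $\bar K|_{t=1}\circ(-)$; since all microsupports here are conic, this descends to the asserted statement on $S^*M$. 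The main obstacle is precisely the existence-and-uniqueness of $K$: the uniqueness rests on the microlocal Morse/propagation theory of \cite{KS}, while the existence amounts to integrating the Hamiltonian flow at the level of sheaves — solving a microlocal Cauchy problem in the time variable — which is where the substance of \cite{GKS} resides.
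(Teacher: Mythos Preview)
The paper does not actually prove this theorem: it is quoted as an input from \cite{GKS} and no argument is given beyond the citation. Your proposal is a faithful and correct outline of the Guillermou--Kashiwara--Schapira construction --- the homogeneous lift of the contact isotopy, the existence and uniqueness of the quantizing kernel $K$ on $M\times M\times I$ with prescribed microsupport and initial condition $K|_{t=0}=\Z_{\Delta_M}$, the propagation argument in the $t$-direction for uniqueness, the local existence and gluing, and the invertibility via the reverse isotopy combined with the microsupport estimate for convolution. So there is nothing to compare: you have supplied precisely the argument the paper defers to the literature.
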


The relation to the problem of interest is clear.  For a submanifold 
$Y \subset X$, we write $\T_Y \subset S^*X$ for the unit  conormal bundle.

\begin{corollary} Let $N, N'$ be compact submanifolds of $M$.  A Legendrian isotopy 
of conormal tori $\T_N \rightsquigarrow  \T_{N'} $ induces an equivalence of  
categories $shv_{\T_N}(M) \cong shv_{\T_{N'}}(M)$. 
\end{corollary}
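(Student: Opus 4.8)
The plan is to deduce this directly from the GKS theorem stated above, the only work being to upgrade the given Legendrian isotopy of conormal tori to a \emph{compactly supported} ambient contact isotopy of $S^*M$. Write the isotopy as a smooth family of Legendrian submanifolds $\Lambda_t \subset S^*M$, $t\in[0,1]$, with $\Lambda_0 = \T_N$ and $\Lambda_1 = \T_{N'}$. Because $N$ and $N'$ are compact, each $\Lambda_t$ is a compact sphere bundle over a compact base, and by smoothness the total trace $\bigcup_{t\in[0,1]} \Lambda_t\times\{t\}$ is a compact subset of $S^*M\times[0,1]$.

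First I would apply the Legendrian isotopy extension theorem --- the standard consequence of Gray's stability theorem recalled in the introduction --- to the family $\Lambda_t$. This produces a time-dependent contact Hamiltonian, defined on a neighborhood of the trace, whose flow moves $\Lambda_0$ along the family; multiplying this Hamiltonian by a bump function that is identically $1$ on a smaller neighborhood of the trace and supported in a slightly larger one yields a globally defined, compactly supported, time-dependent contact vector field on $S^*M$. Its flow $\varphi_t$ is then defined for all $t\in[0,1]$, is the identity outside a compact set, and still satisfies $\varphi_t(\T_N)=\Lambda_t$; in particular $\varphi_1(\T_N)=\T_{N'}$.

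Finally, feeding the compactly supported contact isotopy $\{\varphi_t\}$ into the GKS theorem gives the equivalence $shv_{\T_N}(M)\cong shv_{\T_{N'}}(M)$, as claimed.

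There is no serious obstacle here; the one point deserving care is the compact-support claim, for which compactness of the conormal tori --- equivalently, of $N$ and $N'$ --- is exactly what is needed. Note that noncompactness of $M$ itself causes no difficulty, since the entire extension construction takes place in an arbitrarily small neighborhood of the (compact) trace of the Legendrian isotopy.
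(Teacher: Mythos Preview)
Your argument is correct and is exactly the paper's approach: the paper states the corollary without proof, having already noted in the introduction that Gray's stability theorem upgrades a Legendrian isotopy to a compactly supported ambient contact isotopy, after which the GKS theorem applies directly. One small slip in your write-up: the intermediate $\Lambda_t$ are not in general conormal bundles, so they are not ``sphere bundles over a compact base''; however, they are all diffeomorphic to the compact $\T_N$, so compactness of the trace---and hence the compact-support cutoff---still goes through.
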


That is, the category $shv_{\T_N}(M)$ depends only on the Legendrian isotopy
type of  $\T_N$.  

\vspace{2mm}
For the above results, it is essential to work with the derived 
category of sheaves (or a dg enhancement), rather than the abelian category.   However, 
derived categories tend to have many autoequivalences, making them somewhat slippery 
to use as invariants; consider for example the existence of nontrivial derived equivalences of algebraic varieties \cite{Orl}.  Thus we will seek to produce
more familiar invariants.

As we have allowed the microsupport to occupy the whole conormal bundle to $N$, the 
category $shv_{\T_N}(M)$ is just the category of sheaves constructible with respect
to the stratification $$M = (M \setminus N) \cup N$$ That is,  $\cF \in shv_{\T_N}(M)$
if and only if $\cF|_{M \setminus N}$ and $\cF|_{N}$ are locally constant.

%

%
%

Let $i: N \hookrightarrow M \hookleftarrow M \setminus N: j$ be the open and closed inclusions.  
The (derived) categories of local systems on $N$ and $M \setminus N$ embed fully faithfully into 
$shv_{\T_N}(M)$ via $i_! = i_*$ and $j_!$ respectively.  We will identify these categories of
local systems with their images, and denote them by $loc(N)$ and $loc(M \setminus N)$.  Any object
can be decomposed into objects from these subcategories, by the devissage exact triangle
(the analogue of excision in sheaf theory): 
$$j_! j^! \cF \to \cF \to i_* i^* \cF \xrightarrow{[1]}$$

From this, the adjunction $\Hom_M(\cF, i_* \cN) = \Hom_{N}(i^* \cF, \cN)$, and the  vanishing
$i^*j_! = 0$,  it follows that 
$loc(M \setminus N)$ is characterized as the ``right orthogonal complement'' of $loc(N)$, i.e.,
those objects whose homs to $loc(N)$ are zero.  Likewise $loc(N)$ is the left orthogonal complement
to $loc(M \setminus N)$.  

%
%

\begin{theorem}  \label{thm:groupring}
If $N$ is compact and $M$ is noncompact, then a Legendrian isotopy 
$\T_N \to \T_{N'}$ induces an isomorphism of integral group rings
$\Z[\pi_1(M \setminus N)] \cong \Z[\pi_1(M \setminus N')]$. 
\end{theorem}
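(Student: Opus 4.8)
The plan is to extract $\Z[\pi_1(M\setminus N)]$ from $shv_{\T_N}(M)$ in a purely categorical way, using the fibre functor at a point as rigidification, and then invoke the $\T_N$-invariance of this category (the Corollary above) to conclude.

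\emph{Step 1: Pin down the two orthogonal subcategories and a fibre functor.} As the excerpt observes, $shv_{\T_N}(M)$ contains two distinguished full subcategories $loc(N)$ and $loc(M\setminus N)$, each characterized purely categorically as the left/right orthogonal of the other via the devissage triangle and the vanishing $i^*j_!=0$. A Legendrian isotopy $\T_N\to\T_{N'}$ gives an equivalence $shv_{\T_N}(M)\cong shv_{\T_{N'}}(M)$ which must therefore carry $loc(M\setminus N)$ to $loc(M\setminus N')$ (the orthogonality is intrinsic). Since $M$ is noncompact, the isotopy — which by Gray stability we may take supported near $\T_N$ — can be pushed off the cofibre $S^*_pM$ at a chosen point $p\in M\setminus N$. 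Thus the equivalence commutes with the stalk functor $\Phi_p=$ "take the stalk at $p$", at least up to the compatible choice of $p$ and $p'$; this is the rigidifying input. Concretely I want to argue that $\Phi_p$ restricted to $loc(M\setminus N)$ is (equivalent to) the forgetful functor from local systems on $M\setminus N$ to chain complexes, and that the isotopy intertwines $\Phi_p$ with $\Phi_{p'}$.

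\emph{Step 2: Recover the group ring from $(loc(M\setminus N),\Phi_p)$.} The category $loc(M\setminus N)$ with its fibre functor $\Phi_p$ is the category of (dg) local systems with a fibre functor; by Tannakian-type reconstruction, the endomorphisms of $\Phi_p$ as a monoidal/exact functor — or, more elementarily, the rank-one objects together with their tensor structure, equivalently the automorphisms of the fibre functor — recover the group algebra. The cleanest route: rank-one local systems (invertible objects concentrated in degree $0$, detectable categorically as the objects $\cL$ with $\Phi_p(\cL)$ of rank one and $\cL$ invertible under $\otimes$) form the group $H^1(M\setminus N;\Z^\times)$... but I actually want the \emph{full} group ring, including noncommutative $\pi_1$. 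So instead I reconstruct $\pi_1(M\setminus N,p)$ as the monoid (in fact group) of natural automorphisms of $\Phi_p\colon loc(M\setminus N)\to \mathrm{Ch}(\Z)$, i.e. $\pi_1 = \mathrm{Aut}^{\otimes}(\Phi_p)$ is replaced by the plain fact that $loc(M\setminus N)\simeq \mathrm{Rep}_{\Z}(\pi_1)$ of the fundamental groupoid based at $p$, and $\Z[\pi_1(M\setminus N,p)] = \mathrm{End}(\Phi_p)^{\mathrm{op}}$ where $\Phi_p$ is viewed as the corepresenting object $\Z[\pi_1]$-in $loc(M\setminus N)$ under $\mathrm{Hom}(\Z[\pi_1],-)\cong\Phi_p$. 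The point is that each of these constructions is manifestly functorial in the pair (category, fibre functor), so an equivalence $shv_{\T_N}(M)\cong shv_{\T_{N'}}(M)$ intertwining $\Phi_p$ and $\Phi_{p'}$ produces a ring isomorphism $\Z[\pi_1(M\setminus N)]\cong\Z[\pi_1(M\setminus N')]$.

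\emph{The main obstacle.} Steps 1 and 2 are, taken alone, formal. The real work — and where I expect to spend almost all the effort — is justifying that the equivalence induced by the isotopy genuinely intertwines the fibre functors $\Phi_p$ and $\Phi_{p'}$, i.e. that the noncompactness of $M$ lets us choose the contact isotopy so that it is the identity on a neighborhood of the cosphere fibre $S^*_pM$ for a fixed $p$, \emph{throughout} the isotopy, and that the GKS sheaf quantization is then literally the identity on stalks at $p$. This requires unwinding the GKS construction enough to see that a contact isotopy supported away from $S^*_pM$ quantizes to a kernel that restricts to the identity over $\{p\}\times M$, hence commutes with $i_p^*$ where $i_p\colon\{p\}\hookrightarrow M$. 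There is also a secondary technical point: I must check that $loc(M\setminus N)$, as a subcategory of the triangulated (or dg) category $shv_{\T_N}(M)$, really is equivalent to representations of $\pi_1$ of the complement and not of something coarser — this is where "constructible with respect to $M=(M\setminus N)\cup N$" and the hypothesis that $N$ is a submanifold (so $M\setminus N$ is an open manifold, locally contractible, with honest $\pi_1$) get used. Once the fibre functor is shown to be preserved, combining with the categorical reconstruction of $\Z[\pi_1]$ and the Corollary gives the theorem.
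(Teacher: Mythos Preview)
Your overall strategy matches the paper's, but there is a genuine logical gap in Step~1. You assert that the equivalence $\Phi$ must carry $loc(M\setminus N)$ to $loc(M\setminus N')$ because ``orthogonality is intrinsic''. This is circular: each of $loc(N)$ and $loc(M\setminus N)$ is the orthogonal of the \emph{other}, but neither is characterized intrinsically on its own by that relation. A triangulated category can have many semi-orthogonal decompositions, and an arbitrary equivalence need not respect any particular one. The paper closes this gap precisely with the stalk functor you have in hand: one first characterizes $loc(N)$ (not $loc(M\setminus N)$) as the full subcategory of objects with vanishing stalk at $m$, which is manifestly preserved once $\cF_m \cong (\Phi\cF)_m$; only \emph{then} does orthogonality pin down $loc(M\setminus N)$. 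So you should reorder: establish stalk-preservation first, use it to identify $loc(N)$, and only afterwards pass to the right orthogonal.

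Two smaller remarks. First, the point you flag as the ``main obstacle'' --- that $\Phi$ intertwines the stalk functors --- is actually quick: since $M$ is noncompact and the contact isotopy is compactly supported, choose $m$ outside the projection to $M$ of that compact support; the GKS kernel is then the graph of the identity over $m$, giving $\cF_m \cong (\Phi\cF)_m$ immediately. Second, your detour through invertible objects and $\mathrm{Aut}^\otimes$ is a dead end: the GKS equivalence is built as a convolution and does \emph{not} respect the tensor product (indeed $shv_\Lambda$ is not even closed under tensor for a general $\Lambda$ along the isotopy). The paper avoids monoidal structure entirely: use the stalk functor once more to cut out the abelian heart (objects whose stalk at $m$ has cohomology in degree zero), and then recover $\Z[\pi_1(M\setminus N)]$ as the endomorphism ring of the forgetful functor to $\Z$-modules, allowing infinite rank so that the regular representation is available.
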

\begin{proof}
Fix a compactly supported contact isotopy inducing the Legendrian isotopy,
and choose  $m\in M$ beyond the projection to $M$ of this compact set.  
Denoting by $\Phi$ the \cite{GKS} equivalence induced by the isotopy, we have a natural isomorphism
$\cF_m \cong (\Phi \cF)_m$.

The subcategories $loc(N)$ and $loc(N')$ of $shv_{\T_{N}}(M)$ and 
$shv_{\T_{N'} M}(M)$ can be characterized as the full subcategories of objects whose stalk at
$m$ vanishes.  Thus $\Phi$ carries $loc(N)$ to $loc(N')$.  Hence also $\Phi$ carries the right orthogonal
complement of one of these to the right orthogonal complement of the other, i.e., 
carries $loc(M \setminus N)$ to $loc(M \setminus N')$.  
The abelian, rather than derived, category of modules over $\Z[\pi_1(M \setminus N)]$ is characterized
as the subcategory of $loc(M \setminus N, \Z)$ whose fiber at $m$ has cohomology concentrated in degree
zero.  

Now recall that any ring $A$ can be recovered as the endomorphisms of the forgetful functor 
$A-mod \to \Z-mod$.  Thus we can recover $\Z[\pi_1(M \setminus N)]$ from the category 
$loc(M \setminus N, \Z)$ together with the fiber functor $\cL \to \cL_m$.  Strictly speaking, this
argument requires allowing infinite $\Z$-rank 
local systems, specifically the local system with fiber $\Z[\pi_1(M \setminus N)]$, but this is unproblematic.
%
\end{proof}

\begin{remark}
By {\em not} passing to the abelian category,
one could learn that in fact the rings of chains on the based loop spaces on 
$M \setminus N$ and $M \setminus N'$ are quasi-isomorphic. One can
learn even more by working over the sphere spectrum rather than $\Z$;
it is known to experts that the microlocal sheaf theory works in this generality,
although the details have not appeared in the literature. 
\end{remark}

It is not generally true that $\Z[G]$ determines $G$; there are counterexamples
even amongst finite groups \cite{Her}.
The most basic difficulty, studied by Higman in his thesis \cite{Hig}, is that $\Z[G]$ can have nontrivial units, i.e., 
those which are not $\pm g$ for
$g \in G$.  He conjectured (and it remains open) that a torsion free group has no nontrivial units; he also 
observed that imposing the much stronger condition of left-orderability ensures that there are 
no nontrivial units.  Note that such a group can be recovered from $\Z[G]$ as the quotient of the units by 
the torsion units, since as $G$ is torsion free and there are no nontrivial units, the only torsion units are $\pm 1$. 

\begin{remark}
In light of the previous remark, it is natural to ask whether 
Hertling's counterexample still works for the sphere-spectrum group-ring. 
\end{remark}

Fortunately, it is known that the fundamental group of a non-split
link complement is locally indicable, hence left-orderable \cite{HS}. 
We conclude: 

\begin{theorem}
If $L, L' \subset \R^3$ are links with Legendrian isotopic conormal tori, then 
their complements have isomorphic fundamental groups. 
\end{theorem}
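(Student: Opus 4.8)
The plan is to deduce this quickly from Theorem~\ref{thm:groupring} together with classical facts about integral group rings of orderable groups, following the strategy indicated in the preceding discussion. Write $G = \pi_1(\R^3 \setminus L)$ and $G' = \pi_1(\R^3 \setminus L')$; deleting a point from an open $3$-manifold does not change $\pi_1$, so these coincide with the usual link groups $\pi_1(S^3 \setminus L)$ and $\pi_1(S^3 \setminus L')$. First I would record that $G$ and $G'$ are left orderable, hence torsion free: for a non-split link this is \cite{HS} (the complement group is locally indicable, hence left orderable), and for a general link, cutting along a maximal collection of splitting spheres and applying van Kampen (with $\pi_1(S^2)=1$) exhibits the complement group as a free product of non-split link groups, so left orderability of $G$ follows from the non-split case, since a free product of left orderable groups is left orderable. (If one wants the theorem only for non-split links, as in Theorem~1, this last reduction is unnecessary.)

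Next I would apply Theorem~\ref{thm:groupring} with $M=\R^3$ (noncompact), $N=L$ and $N'=L'$: a Legendrian isotopy $\T_L \to \T_{L'}$ yields a ring isomorphism $\Z[G] \cong \Z[G']$. It then remains to recover the group $G$ functorially from the ring $\Z[G]$. For this I use Higman's observation that a left orderable $G$ has only trivial units in $\Z[G]$, so that $U(\Z[G]) = \{\pm g : g \in G\}$; comparing augmentations shows $g \ne -h$ in $\Z[G]$ for all $g, h \in G$, so $(\varepsilon, g) \mapsto \varepsilon g$ defines a group isomorphism $\{\pm 1\} \times G \xrightarrow{\sim} U(\Z[G])$ (note that $\pm 1$ is central). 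Since $G$ is torsion free, $\{\pm 1\}$ is precisely the torsion subgroup of $U(\Z[G])$, and therefore $G \cong U(\Z[G])/\{\pm 1\}$ canonically in terms of $\Z[G]$. Applying this to both sides of $\Z[G] \cong \Z[G']$ gives $G \cong G'$, as claimed.

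The real content of the theorem is Theorem~\ref{thm:groupring}, which is already established; granting it, there is no serious obstacle, and what is written above is essentially an assembly of cited facts from $3$-manifold topology and the theory of group rings. The point needing the most care is making sure Higman's ``trivial units'' conclusion is genuinely available for the groups in question --- that is, the passage from the non-split case of \cite{HS} to arbitrary links via splitting spheres, and the standard but not entirely trivial fact that left orderability is inherited by free products. Everything else is formal.
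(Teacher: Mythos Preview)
Your proof is correct and follows the paper's approach exactly: invoke Theorem~\ref{thm:groupring} to obtain an isomorphism of integral group rings, then use left-orderability of the link group (via \cite{HS}) together with Higman's trivial-units observation to recover the group as units modulo torsion. The only addition you make is the free-product reduction from arbitrary links to non-split links, which the paper does not spell out---its discussion preceding the theorem cites \cite{HS} only for non-split link complements---so your write-up is slightly more complete on this point.
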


\begin{remark}
One might want to try and deduce an isomorphism of knot groups instead by showing the equivalence of categories 
preserved the monoidal
structure.  However, this does not follow from the \cite{GKS} construction: 
the isomorphism is constructed
as a convolution, and convolution does not generally respect the tensor product.  
Moreover, along the isotopy, the Legendrian is not generally 
a conormal, and in this case, the category $shv_\Lambda(\R^3)$ is
not closed under tensor product. 
\end{remark}

From this it already follows that the conormal torus distinguishes amongst prime knots by 
the work of Waldhausen
\cite{Wal} and Gordon and Luecke \cite[Cor. 2.1]{GL}.  

\section{Longitudes and meridians}

To get the full result,
we should show that the isomorphism respects the ``peripheral subgroups'', i.e., 
the images of
the fundamental groups of the boundary tori of $\R^3 \setminus L$ and of $\R^3 \setminus L'$; note that having done so we need appeal only to \cite{Wal}
and not additionally \cite{GL}.  
It will suffice to show that there is some
method, {\em commuting with the sheaf quantization $\Phi$ above}, 
to extract the holonomies of a given local system around the longitude
and meridian of each component of the link.  Indeed, having done so, 
one considers the infinite rank local system given by the fundamental group
ring of the complement, and extracts the name of the meridian or longitude
from the fact that holonomy around said element is given by multiplication 
by said element.

We will use the microlocalization functor of \cite{KS}.
$$\mu \cH om : shv(X)^{op} \times shv(X) \to shv(T^* X) $$ 
The point is that, on the one hand, as we recall below, 
this microlocalization functor can recover the peripheral data; but on the other
{\em microlocalization commutes with sheaf quantization}. That is, 
given a contact isotopy $\phi: S^*X \to S^*X$, and its sheaf quantization
$\Phi: shv(X) \to shv(X)$, one has the compatibility \cite{GKS}: 

$$\phi_* \mu \cH om(\cF, \cG)|_{S^*X} = \mu \cH om(\Phi \cF, \Phi \cG)|_{S^*X}$$

We have shown above that $\Phi$ carries local systems to local systems,
and moreover respects the subcategory $Loc \subset loc$ of local systems
in the usual sense, as opposed to derived local systems.  By abuse
of notation we also write $\Phi$ for the corresponding restricted functor. 

\begin{proposition}
For a component $K \subset L$ and the corresponding
component $K' \subset L'$, 
the following diagram commutes, and the vertical maps carry
information equivalent to the restriction of the local system to the torus
which is the boundary of a tubular neighborhood.
$$
\begin{CD}
Loc(\R^3 \setminus L) @> \Phi >> Loc(\R^3 \setminus L')\\
@V\mu \cH om(\Z_K, j_! \cdot )|_{\T_K} VV  @VV\mu \cH om(\Z_{K'}, j_! \cdot )|_{\T_{K'}}V\\ 
Loc(\T_K) @> \phi_* >> Loc(\T_{K'}) \\
\end{CD}
$$
\end{proposition}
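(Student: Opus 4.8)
The plan is to obtain commutativity of the square formally from the microlocalization--quantization compatibility of \cite{GKS} recalled above, and to identify the two vertical functors by a local computation with $\mu\cH om$. Commutativity requires two inputs. The first is that the top horizontal functor is compatible with $j_!$, i.e.\ $\Phi(j_!\cL)\cong j_!(\Phi\cL)$; but this is immediate from the very construction of that functor, which is $\Phi$ restricted to local systems using, as recalled just above, that $\Phi$ preserves the subcategory $loc(M\setminus N)$ of local systems on the complement. The second is that $\Phi(\Z_K)\cong\Z_{K'}$. Granting these, I apply the compatibility $\phi_*\,\mu\cH om(\cF,\cG)|_{S^*\R^3}=\mu\cH om(\Phi\cF,\Phi\cG)|_{S^*\R^3}$ with $\cF=\Z_K$ and $\cG=j_!\cL$ and restrict to $\T_{K'}$; since the chosen contact isotopy carries $\T_K$ diffeomorphically onto $\T_{K'}$, one obtains
\[
\mu\cH om(\Z_{K'},\,j_!\Phi\cL)|_{\T_{K'}}\;\cong\;\phi_*\bigl(\mu\cH om(\Z_K,\,j_!\cL)|_{\T_K}\bigr),
\]
which is exactly the commutativity asserted.

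To see $\Phi(\Z_K)\cong\Z_{K'}$ I would argue in three steps. First, $\Phi(\Z_{\R^3})\cong\Z_{\R^3}$: the microsupport $ss(\Z_{\R^3})\subset S^*\R^3$ is empty, and the \cite{GKS} quantization transports microsupports by $\phi$, so $\Phi(\Z_{\R^3})$ is again a locally constant complex on the contractible $\R^3$; evaluating at the point $m$ of the proof of Theorem~\ref{thm:groupring}, where $\cF_m\cong(\Phi\cF)_m$, identifies its stalk with $\Z$ in degree zero. Second, by Theorem~\ref{thm:groupring} (and its proof) $\Phi$ preserves both $loc(N)$ and $loc(M\setminus N)$, hence commutes with the reflection $\cF\mapsto i_*i^*\cF$ onto $loc(N)$ occurring in the d\'evissage triangle; applying this to $\Z_{\R^3}$ and using $i^*\Z_{\R^3}=\Z_L$ gives $\Phi(\Z_L)\cong\Z_{L'}$. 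Third, an equivalence of categories matches the connected summands of $loc(L)=\prod_i loc(K_i)$ with those of $loc(L')$, and the microsupport property identifies this matching with the one induced by the diffeomorphism $\phi$; restricting $\Phi(\Z_L)\cong\Z_{L'}$ to the summand indexed by $K$ yields $\Phi(\Z_K)\cong\Z_{K'}$.

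For the vertical functors, the claim is that, up to a shift and under the standard identification of the conormal torus $\T_K$ with the boundary $\partial T_K$ of a tubular neighborhood $T_K$ of $K$, the functor $\cL\mapsto\mu\cH om(\Z_K,j_!\cL)|_{\T_K}$ is the restriction $\cL\mapsto\cL|_{\partial T_K}$; since a local system on the torus $\partial T_K\subset\R^3\setminus L$ is precisely the datum of its commuting holonomies around the meridian and longitude of $K$, this is the stated equivalence with the peripheral restriction. This is a local statement near $K$: the functor depends only on $\cL$ restricted to a deleted tubular neighborhood, and in normal coordinates a neighborhood of $K$ is $K\times\R^2$ with $K=K\times\{0\}$ and $\T_K\cong K\times S^1$, $S^1=S^*_0\R^2$. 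It then reduces, by working locally along $K$ together with a Künneth computation, to the model identification of $\mu\cH om(\Z_{\{0\}},j_!\cM)|_{S^*_0\R^2}$ with the local system $\cM$ transported to the conormal circle (shifted by $[-1]$): for constant $\cM$ this follows from feeding the d\'evissage triangle $j_!\Z_{\R^2\setminus 0}\to\Z_{\R^2}\to\Z_{\{0\}}\xrightarrow{[1]}$ into $\mu\cH om(\Z_{\{0\}},-)|_{S^*_0\R^2}$ together with the elementary computations $\mu\cH om(\Z_{\{0\}},\Z_{\R^2})|_{S^*_0\R^2}=0$ and $\mu\cH om(\Z_{\{0\}},\Z_{\{0\}})|_{S^*_0\R^2}=\Z_{S^*_0\R^2}$ of \cite{KS}, and the general case follows by the same method, resolving $\cM$ via a two-arc cover of the conormal circle.

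I expect this last identification to be the main obstacle --- not the microlocal bookkeeping, which is routine, but the identification $\T_K\cong\partial T_K$ itself. It carries an antipodal and orientation ambiguity, and it is precisely this ambiguity that makes the ``parameterized'' hypothesis necessary in the first theorem and that produces the mirror alternative in the second; making the vertical functor, and in particular its effect on the holonomies around the longitude and meridian, genuinely canonical requires pinning this down with care.
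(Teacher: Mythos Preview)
Your proof is correct and follows the same overall scheme as the paper --- commutativity from the \cite{GKS} compatibility plus $\Phi(\Z_K)\cong\Z_{K'}$, then a local identification of the vertical functors --- but the two sub-arguments differ in interesting ways. For $\Phi(\Z_K)\cong\Z_{K'}$, the paper argues directly: it shows $\Phi\Z_K$ is a local system on $K'$ via the stalk at $m$, uses $\mu\cH om(\Z_K,\Z_K)|_{\T_K}=\Z_{\T_K}$ to see it is rank one in a single degree, and then pins down the monodromy by computing $\Hom(\Z_{\R^3},\Phi\Z_K)=H^*(S^1,\Phi\Z_K)$ and observing that only the trivial rank-one local system on $S^1$ has nonzero cohomology. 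Your route --- recognizing $i_*i^*$ as the projection onto the semiorthogonal factor $loc(L)$, so that any equivalence preserving both factors automatically intertwines it, then splitting off components by microsupport --- is more categorical and sidesteps the cohomology computation entirely. For the vertical functors, the paper invokes \cite[Prop.~4.4.3]{KS} to rewrite $\mu\cH om(\Z_K,j_!\cL)|_{\T_K}$ as the Fourier--Sato transform of the Verdier specialization $\nu_K(j_!\cL)$ and notes that here specialization is just restriction to a tubular neighborhood; this handles arbitrary $\cL$ in one stroke. Your d\'evissage-plus-K\"unneth computation is more elementary but needs the extra pass for nonconstant $\cM$; the two-arc resolution works, though it is tidier to observe that the model identification is natural in $\cM$ as a $\Z[\pi_1(S^1)]$-module and hence extends from the constant case immediately.
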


\begin{proof}
In the case at hand, one has by \cite[Prop. 4.4.3]{KS}
$$\mu \cH om(\Z_K, j_! \cL)|_{\T_K} = 
(\mu_K j_! \cL)|_{\T_K} = (\mathfrak{F}_* \nu_K j_! \cL)|_{\T_K}$$
Above, $\mu_K$ is the functor of microlocalization at $K$, which returns 
a sheaf on $T^*_K \R^3$.  It is, by definition, the Fourier-Sato transform
$\mathfrak{F}_*$ of the Verdier specialization $\nu_K$ at $K$. 
Verdier specialization, in general, is the rescaling limit of a sheaf under
the deformation to the normal bundle; in the case at hand, it can be obtained
just by pulling back along an identification with $T_K \R^3$ with a tubular 
neighborhood of $K$ in $\R^3$.  That is, having made this identification, 
and also writing $j: (T_K \R^3 \setminus 0) \hookrightarrow
T_K \R^3$, we have $\nu_k j_! \cL = j_! \cL$.  Taking the Fourier transform
and restricting to the unit torus $\T_K$ 
gives 
a local system with the same stalk as $\cL$ and monodromies
given by some fixed transformation from of those of $\cL$ (it will
not be important what this transformation is). 

The above argument would prove the statement of the
proposition with $\Z_{K'}$ replaced by 
$\Phi \Z_{K}$.  This is already good enough to show that the 
peripheral subgroup is carried to the peripheral subgroup, but 
to extract detailed information regarding longitudes and
meridians, we need to know that in fact $\Phi \Z_K = \Z_{K'}$.  

To prove this, note first $(\Phi \Z_K)_m = \Z_K|_m = 0$, hence 
$\Phi \Z_K$ is a local system on $K'$.  
Since
$$\Z_{\T_K} = \mu \cH om(\Z_K, \Z_K)|_{\T_K} = 
\phi^* (\mu \cH om(\Phi \Z_K, \Phi \Z_K)|_{\T_{K'}})$$
the sheaf $\Phi \Z_K$ must be concentrated in a single degree, and
rank one there. 

Since $ss(\Phi \Z_{\R^3}) = \phi ss(\Z_{\R^3}) = \emptyset$,
the sheaf $\Phi \Z_{\R^3}$ is locally constant, hence constant since $\R^3$ is
contractible.  Since
$(\Phi \Z_{\R^3})_m = \Z_{\R^3}|_m$ and both sheaves are constant, $\Phi \Z_{\R^3} = \Z_{\R^3}$.  

Thus we have 
$$H^*(S^1, \Z) = \Hom(\Z_{\R^3}, \Z_K) = 
\Hom(\Phi \Z_{\R^3}, \Phi \Z_K) = \Hom(\Z_{\R^3}, \Phi \Z_K) = 
H^*(S^1, \Phi \Z_K)$$

Since the only rank 
one local system on the circle with nontrivial cohomology is 
the constant local system, we conclude that $\Phi \Z_K = \Z_{K'}$. 
\end{proof}

To finish the proof of theorem 1, note that by hypothesis, 
$\phi$ carries meridians to meridians and longitudes to longitudes. 
By the above proposition, it therefore preserves 
holonomy of the microlocalizations around these, which 
is equivalent  (in the same way for both $K$ and $K'$) 
data to the holonomies of the local system around the boundary
of a tubular neighborhood. $\blacksquare$

\vspace{2mm}
We turn to the proof of Theorem 2.  What we need to show 
is that, even without assuming that the isotopy is parameterized, 
the longitude and meridian of $\T_K$ are carried {\em up to signs} 
to the longitude and meridian of $\T_K'$.  In fact we will 
show this for a possibly multiple-component link, but we can only 
conclude the stated result regarding being equivalent or mirror 
in the case of knots, as the signs may be different for each component
of the link. 

Let $K$ be some component of $L$, and $K'$ the corresponding 
component of $L'$.  An argument
similar to that showing $\Phi \Z_K = \Z_{K'}$ shows that $\Phi$
carries nontrivial local systems on $K$ to nontrivial local systems on $K'$.  
Let $\cL$ and $\Phi \cL = \cL'$ be such local systems.  
The meridian of $\T_K$ can be characterized, up to sign, as the primitive
class in $\pi_1(\T_K, \Z)$ for which the holonomy of $\mu \cH om(\Z_K, \cL)$ is trivial.  Note
$\phi_* \mu \cH om(\Z_K, \cL) = \mu \cH om(\Phi \Z_K, \Phi \cL) = 
\mu \cH om (\Z_{K'}, \cL')$. 
We conclude that $\phi$ carries the meridian of $K$ to the meridian
of $K$ up to sign.   Having found the meridians, the longitude of
$K$ can be characterized up to sign as the primitive class in 
$\pi_1(\T_K, \Z)$ which goes
to zero in the quotient of $H_1(\R^3 \setminus L, \Z)$ by all meridians. 
$\blacksquare$

\section{Epilogue}

In addition to being natural 
in pure mathematics, the question of the invertibility of 
the cotangent or cosphere functors has
a physical motivation: 
quantum mechanics concerns the analysis of functions on
$M$ whereas classical mechanics concerns 
the symplectic geometry of $T^*M$ --- how much does
dequantization remember?  
In fact, 
our approach to the study of $\T_M$ is directly related to this 
perspective.  More precisely, the sheaf category can be viewed as standing in 
for the pseudo-differential operators with characteristic variety
$\T_K$; the microlocal analysis of these operators 
has a precise relation to the WKB method
in quantum mechanics; see e.g. \cite{WB}. 

In the case of knots, there is 
also a rather different physical motivation. 
According to Witten, 
the Chern-Simons invariants
of 3-manifolds 
(aka Reshetikhin-Turaev invariants) 
in terms of topological string theory in the cotangent bundle \cite{Wit}.
There is a version for knots, 
in which the conormal serves as a Lagrangian or Legendrian
boundary condition \cite{Wit, OV, AV, AENV}.  
The fact that one can conjecturally extract such strong 
invariants from $\T_K$ was another motivation for
the question of the extent to which
it determines $K$.  

The holomorphic curve approach to this question is the 
mathematical interpretation of the above string theoretic notions. 
Here 
%
%
%
%
one associates a certain differential graded algebra 
to a Legendrians in a contact manifold.  
In the case at hand it can be effectively computed \cite{Ng1, Ng2}, 
and related to the 
fundamental group of the knot complement \cite{Ng3, CELN}. 
Thus, Legendrian isotopies between conormal tori induce
some relations between knot dgas and therefore between knot groups.

One can prove,
along the lines of \cite{NZ, N1, NRSSZ}, a  ``brane quantization is sheaf quantization''  
result asserting that a representation category
of knot contact homology is equivalent to the category of sheaves studied here.   
The idea is to associate to any augmentation of knot contact homology the sheaf on $\R^3$ whose 
stalk is given by the bilinearized contact homology \cite{BC, NRSSZ} from the cosphere over that point to the given
augmentation.  (More precisely, this captures the ``Q=1'' specialization of the knot contact homology.)  
The most difficult point of the argument is showing that this map is essentially surjective;
details will appear elsewhere \cite{ENS2}.  In particular, it will follow that a certain variant of the knot contact DGA is also
a complete knot invariant.
This construction can be shown to agree with the more ad-hoc constructions of Ng \cite{Ng1, Ng4} and Cornwell 
\cite{Cor1, Cor2} relating
augmentations to local systems and knot contact homology to $\pi_1$, when the latter are defined.

In terms of this equivalence, one can ask why the previous works \cite{Ng3, GLi} did not arrive at the result proven here. 
One reason is presumably that the sheaf category is by nature closer to local systems, so it
is easier to work with.  More precisely, the $j_! \cL$ used here do not sit in the heart of the natural $t$-structure for
the augmentation category.  But in fact the key point is 
 the stalk-at-$m$ functor.  
This can be introduced in the augmentation category by working relative to a cotangent fiber over $m$; we have since done so and given
a direct holomorphic-curve argument for the result proven here \cite{ENS} without
appealing to the (still forthcoming) \cite{ENS2}.  This moreover 
shows that a certain enhancement of the relative contact homology 
already is a complete knot invariant. 

More recently still, 
another work has appeared which  establishes the equivalence of the sheaf category with the
knot DGA \cite{BEY}.  This proceeds by appeal to the representation-theoretic formulation of \cite{Ng1} rather
than the geometric techniques of \cite{CELN}, and may be combined with the present article to give
another approach to the results of \cite{ENS2}.

%

\newpage 


\begin{thebibliography}{10}

\bibitem[Abo]{Abo}
Mohammad Abouzaid, {\em Nearby Lagrangians with vanishing Maslov class are homotopy equivalent}, Inventiones mathematicae 189.2 (2012): 251-313.

\bibitem[AK]{AK} Mohammed Abouzaid and Thomas Kragh, 
{\em Simple Homotopy Equivalence of Nearby Lagrangians}, arXiv:1603.05431. 

\bibitem[AV]{AV} Mina Aganagic and Cumrun Vafa, {\em
Large N duality, mirror symmetry, and a Q-deformed A-polynomial for knots}, arXiv:1204.4709. 



\bibitem[AENV]{AENV} Mina Aganagic, Tobias Ekholm, Lenhard Ng, and Cumrun Vafa, 
{\em Topological strings, D-model, and knot contact homology},  
Advances in Theoretical and Mathematical Physics {\bf 18}.4 (2014): 827-956.

\bibitem[BEY]{BEY} Yuri Berest, Alimjon Eshmatov, and Wai-kit Yeung,
{\em Perverse Sheaves and Knot Contact Homology}, arXiv:1610.02438.


\bibitem[BO]{BO} Alexei Bondal and Dmitri Orlov, 
{\em Reconstruction of a variety from the derived category and groups of autoequivalences}, 
Compositio Math. 125.3 (2001), 327--344.


\bibitem[BC]{BC} Fr\'ed\'eric Bourgeois and Baptiste Chantraine, {\em Bilinearized Legendrian contact homology and the augmentation category},  Journal of Symplectic Geometry {\bf 12}.3 (2014), 553--583. 



\bibitem[Chi]{Chi} 
Sheng-Fu Chiu, {\em Non-squeezing property of contact balls},  arXiv:1405.1178. 


\bibitem[CELN]{CELN}
Kai Cieliebak, Tobias Ekholm, Janko Latschev, Lenhard Ng, {\em 
Knot contact homology, string topology, and the cord algebra}, 
arXiv:1601.02167. 

\bibitem[Cor1]{Cor1} Christopher R. Cornwell, {\em KCH representations, augmentations, and A-polynomials},
 arXiv:1310.7526.
 \bibitem[Cor2]{Cor2} Christopher R. Cornwell, {\em 
 Knot contact homology and representations of knot groups} 
 J. Topol.  {\bf 7}.4 (2014), 1221--1242.



\bibitem[Ekh]{E} Tobias Ekholm, {\em Morse flow trees and Legendrian contact homology in 1-jet spaces},
Geometry \& Topology {\bf 11}.2 (2007): 1083-1224.


\bibitem[EENS]{EENS} 
Tobias Ekholm, John Etnyre,  Lenhard Ng,  and Michael Sullivan, {\em Knot contact homology}, 
Geometry \& Topology {\bf 17}.2 (2013), 975-1112.

\bibitem[ENS]{ENS} Tobias Ekholm, Lenhard Ng, and Vivek Shende, {\em Contact Fukaya categories}, 
in preparation.


\bibitem[ENS2]{ENS2} Tobias Ekholm, Lenhard Ng, and Vivek Shende, {\em A complete knot invariant from contact homology}, arxiv:1606.07050.

\bibitem[FSS]{FSS}
Kenji Fukaya, Paul Seidel, and Ivan Smith, {\em Exact Lagrangian submanifolds in simply-connected cotangent bundles}, 
Inventiones mathematicae 172.1 (2008): 1--27.


\bibitem[Gr]{Gr} John Gray, {\em Some global properties of Contact
structures}, 
Annals of Mathematics 69.2 (1959), pp. 421-450. 

\bibitem[GL]{GL} Cameron Gordon and John Luecke, {\em Knots are determined by their complements}, 
Journal of the American Mathematical Society {\bf 2}.2 (1989), 371-415. 

\bibitem[GLi]{GLi} Cameron Gordon and Tye Lidman, {\em Knot contact homology detects cabled, composite,
and torus knots}, arxiv:1509.01642. 


\bibitem[Gui]{Gui} St\'ephane Guillermou, {\em Quantization of conic Lagrangian submanifolds of cotangent bundles}, arxiv:1212.5818. 

\bibitem[Gui2]{Gui2} St\'ephane Guillermou, {\em The Gromov-Eliashberg theorem by microlocal sheaf theory}, arxiv:1311.0187.

\bibitem[Gui3]{Gui3} St\'ephane Guillermou, {\em The three cusps conjecture}, arXiv:1603.07876.



\bibitem[GKS]{GKS} St\'ephane Guillermou, Masaki Kashiwara, and Pierre Schapira, {\em
Sheaf quantization of Hamiltonian isotopies and applications to nondisplaceability problems}, 
Duke Mathematical Journal {\bf 161}.2  (2012), 201--245.


\bibitem[Her]{Her} Martin Hertwick, {\em A counterexample to the isomorphism problem for integral group rings}, 
Annals of Mathematics (2) {\bf 154}.1 (2001), 115-138.

\bibitem[Hig]{Hig} 
Graham Higman, {\em The units of group-rings}, Proceedings of the London Mathematical Society (2) {\bf 46} (1940), 231-248.


\bibitem[How]{How} James Howie, {\em On locally indicable groups}, 
Mathematische Zeitschrift {\bf 180}.4 (1982), 445--461. 

\bibitem[HS]{HS}
 James Howie and Hamish Short, {\em The band-sum problem},
 J. London Math. Soc. (2), 31.3 (1985), 571--576. 



\bibitem[KS]{KS} Masaki Kashiwara and Pierre Schapira, \underline{Sheaves on manifolds}, 
Grundlehren Math. Wiss. 292 (Springer 1990).



\bibitem[N1]{N1} David Nadler, {\em Microlocal branes are constructible sheaves},
Selecta Math. (N.S.) {\bf 15}.4 (2009), 563--619.


\bibitem[NZ]{NZ} David Nadler and Eric Zaslow, 
{\em Constructible Sheaves and the Fukaya Category}, J. Amer. Math. Soc. {\bf 22} (2009), 233--286.




\bibitem[Ng1]{Ng1} 
Lenhard Ng, {\em Knot and braid invariants from contact homology, I},  Geometry \& Topology {\bf 9} (2005), 247--297. 
\bibitem[Ng2]{Ng2} 
Lenhard Ng, {\em Knot and braid invariants from contact homology, II},  Geometry \& Topology {\bf 9} (2005), 1603--1637. 
\bibitem[Ng3]{Ng3} 
 Lenhard Ng, {\em Framed knot contact homology}, Duke Mathematical Journal {\bf 141}.2 (2008), 365--406.
\bibitem[Ng4]{Ng4} 
Lenhard Ng, {\em A topological introduction to knot contact homology}, arxiv:1210.4803. 


\bibitem[NRSSZ]{NRSSZ} Lenhard Ng, Dan Rutherford, Vivek Shende, 
Steven Sivek, and Eric Zaslow, {\em Augmentations are Sheaves}, arXiv:1502.04939. 

\bibitem[Orl]{Orl} 
Dmitri Orlov, {\em Derived categories of coherent sheaves and equivalences between them}, Russian Math. Surveys 58.3 (2003), 511--591. 

\bibitem[OV]{OV} 
Hirosi Ooguri and Cumrun Vafa, {\em Knot invariants and topological strings}, Nuclear Physics B 577.3 (2000): 419-438.

\bibitem[STZ]{STZ} Vivek Shende, David Treumann, and Eric Zaslow, {\em Legendrian knots and constructible sheaves}, arxiv:1402.0490.

\bibitem[STWZ]{STWZ} Vivek Shende, David Treumann, Harold Williams, and Eric Zaslow, 
{\em Cluster varieties from Legendrian knots}, arxiv:1512.08942. 

\bibitem[STW]{STW} Vivek Shende, David Treumann, and Harold Williams,
{\em On the combinatorics of exact Lagrangian surfaces}, arXiv:1603.07449. 


\bibitem[Tam]{T}  Dmitry Tamarkin, {\em Microlocal condition for non-displaceablility}, 
arXiv:0809.1584. 


\bibitem[Wal]{Wal} Friedhelm Waldhausen, {\em On Irreducible 3-Manifolds Which are Sufficiently Large}, 
Annals of Mathematics (2) {\bf 87}.1 (1968), 56--88. 

\bibitem[WB]{WB} Sean Bates and Alan Weinstein, \underline{Lectures on the Geometry of Quantization}, (AMS 1997). 

\bibitem[Wit]{Wit} Edward Witten, {\em Chern-Simons gauge theory as a string theory}, 
\underline{The Floer memorial volume}, (Birkh\"auser, 1995), 637-678.


\end{thebibliography}
\end{document}